\newcommand{\sign}{\mbox{\rm sign}}
\newcommand{\eps}{{\varepsilon}}
\newcommand{\R}{{\mathbb R}}
\newcommand{\1}{{\mathbf 1}}
\newcommand{\les}{\lesssim}
\newcommand{\txt}{\textstyle}
\newcommand{\scr}{\scriptstyle}
\newcommand{\Rplus}{R^+}
\newcommand{\Rminus}{R^-}
\newcommand{\la}{\langle}
\newcommand{\ra}{\rangle}
\def\norm[#1][#2]{\|#1\|_{#2}}
\def\bignorm[#1][#2]{\big\|#1\big\|_{#2}}
\def\Bignorm[#1][#2]{\Big\|#1\Big\|_{#2}}
\def\japanese[#1]{\langle #1 \rangle}
\def\Im[#1]{{\rm Im}(#1)}
\def\Re[#1]{{\rm Re}(#1)}
\newtheorem{theorem}{Theorem}
\newtheorem{lemma}[theorem]{Lemma}
\theoremstyle{remark}
\newtheorem{remark}[theorem]{Remark}
\begin{document}

\title[The Schr\"odinger equation with a non-smooth magnetic potential]
{Strichartz estimates for Schr\"odinger operators with a non-smooth
magnetic potential}
\date{March 28, 2008}

\author{Michael\ Goldberg}
\thanks{The author received partial support from NSF grant DMS-0600925 during
the preparation of this work.}
\address{Department of Mathematics, Johns Hopkins University,
3400 N. Charles St., Baltimore, MD 21218}
\email{mikeg@math.jhu.edu}

\begin{abstract}
We prove Strichartz estimates for the absolutely continuous evolution of a
Schr\"odinger operator $H = (i\nabla + A)^2 + V$ in $\R^n$, $n \ge 3$.
Both the magnetic and electric potentials are time-independent and satisfy
pointwise polynomial decay bounds.  The vector potential $A(x)$ is assumed to
be continuous but need not possess any Sobolev regularity.  This work is a 
refinement of previous methods, which required extra conditions on
${\rm div}\,A$ or $|\nabla|^{\frac12}A$ in order to place the first order
part of the perturbation within a suitable class of pseudo-differential
operators.
\end{abstract}

\maketitle

\section{Background}
This paper continues an investigation into the dispersive properties of
Schr\"odinger operators taking the form
\begin{equation}\label{eq:H}
\begin{aligned}
H = (i\nabla + A(x))^2 + V(x) &= -\Delta + i(A \cdot \nabla + \nabla\cdot A)
 + |A|^2 + V\\
&= \Delta + L(x)
\end{aligned}
\end{equation}
where $A(x)$ is a vector field in $\R^n$, $n \ge 3$, 
and $V(x)$ is a scalar function. The associated evolution equation
\begin{equation} \label{eq:Schr}
\begin{cases}
iu_t(t,x) = Hu(t,x)\\
u(0,x) = u_0(x)
\end{cases} 
\end{equation}
models the motion of a single charged particle within an ambient
electromagnetic field, with $V$ and $A$ serving respectively as the
electrostatic and magnetic potentials.  Such operators also appear routinely
when a nonlinear Schr\"odinger equation is linearized around a nonzero static
solution.

The starting point for estimates in the free case ($L \equiv 0$) is an
explicit formula for the propagator $e^{it\Delta}$ based on Fourier inversion.
\begin{equation*}
u(t,x) = (4\pi it)^{-n/2}\int_{R^n} e^{i\frac{|x-y|^2}{4t}}u_0(x)\,dx
\end{equation*}
It is immediately clear that $u = e^{it\Delta}u_0$ satisfies a family of
dispersive bounds
\begin{equation} \label{eq:dispersive}
\norm[u(t,\,\cdot\,)][q] \les |t|^{-n(\frac12-\frac1q)}\norm[u_0][q'], \qquad
2 \le q \le \infty,
\end{equation}
the $p=2$ case following from Plancherel's identity and the $p=\infty$ case
from evaluation of the convolution integral above.  Each of the dispersive
bounds with $2 \le q \le \frac{2n}{n-2}$ can be incorporated into a $TT^*$
argument to prove one of the Strichartz inequalities,
\begin{equation} \label{eq:freeStrich}
\norm[e^{it\Delta}u_0][L^p_tL^q_x] \les \norm[u_0][2], \qquad
{\txt \frac2p + \frac{n}{q} = \frac{n}{2}}, \quad 2 \le p,q \le \infty.
\end{equation}
This line of argument requires nothing more than the Hardy-Littlewood-Sobolev
inequality (except for the $p=2$ endpoint case which is more
delicate~\cite{KeT98}), and the resulting estimate have played a central 
and the resulting estimates have played a central role in the well-posedness
theory of nonlinear Schr\"odinger equations from their inception.
It would therefore be desirable to prove an analogous statement for solutions 
of~\eqref{eq:Schr} with more general $H$.  Unfortunately, it may not suffice
to replace the Laplacian in~\eqref{eq:Schr} with another Schr\"odinger
operator $H$ because of the possible existence of point spectrum.
Each eigenvalue of $H$ gives rise to a solution of the form $u(t,x) = 
e^{-i\lambda t}\psi(x)$, negating the bound
\begin{equation*}
\norm[e^{-itH}u_0][L^p_tL^q_x] \les \norm[u_0][2]
\end{equation*}
for every $p < \infty$.

For short range self-adjoint perturbations of the Laplacian, the spectrum of
$H$ should remain absolutely continuous along the positive real axis, with
discrete negative eigenvalues possibly accumulating at zero.  No eigenvalues
are embedded within the continuous spectrum~\cite{KoT06}.  In many cases it
is possible to prove that the linear Schr\"odinger evolution decomposes into a 
discrete sum of bound states, plus a radiation term that enjoys
the same dispersive properties as a free wave.  Our goal is to expand the
class of potentials for which this behavior is known to occur.

Following the notation in Chapter XIV of~\cite{H85}, define the
function space
\begin{equation} \label{eq:B}
\norm[f][B] := \sum_{j=0}^\infty 2^{\frac{j}{2}}\norm[f][L^2(D_j)],
\qquad
\norm[f][B^*] := \sup_{j\ge0}\,2^{-\frac{j}{2}}\norm[f][L^2(D_j)],
\end{equation}
where $D_j = \{x\in R^n: |x| \sim 2^j\}$ is a decomposition of $\R^n$ into
dyadic shells for $j \ge 1$ and $D_0 = \{|x|\in\R^n: |x| \le 1\}$.  These
share numerous characteristics with weighted $L^2$, and we designate $B_s$
to be the associated homogeneous Sobolev spaces with norm
\begin{equation} \label{eq:B1}
\norm[f][B_{s}] := \norm[\,|\nabla|^{s} f][B], \qquad 
\norm[f][B_{s}^*] := \norm[\,|\nabla|^s f][B^*].
\end{equation}
One other
Banach space related to $B$ will also play an important role during the
discussion.  Let
\begin{equation} \label{eq:X}
\norm[g][X] := \Big(\sum_{j=0}^\infty
     2^{3j}\norm[g][L^\infty(D_j)]^2\Big)^{1/2}
\end{equation}
Observe that pointwise multiplication by a function in $X$ maps $B^*$ to the
weighted space $\japanese[x]^{-1}L^2(\R^n)$, and also maps $\japanese[x]L^2$
to $B$.  

\begin{theorem}\label{thm:main}
Let $H = (i\nabla + A)^2 + V$ be a magnetic Schr\"odinger operator on ${\R^n}$,
$n \ge 3$, whose scalar and magnetic potentials are bounded functions
satisfy the conditions
\begin{align}
&\japanese[x]^2V \in L^\infty(\R^n),\ \ {\rm and}\ \ 
\limsup_{|x|\to\infty} |x|^2|V(x)| = 0 \tag{C1} \label{eq:Vdecay}\\
&A\ {\rm is\ continuous,}\ \ {\rm and}\ \ \norm[A][X] < \infty \tag{C2} 
\label{eq:Adecay}
\end{align}
Let $P_{ac}(H)$ represent the orthogonal projection onto the absolutely
continuous spectrum of $H$.  If zero is not an eigenvalue or a resonance of
$H$, then the Strichartz inequalities
\begin{equation} \label{eq:Strichartz}
\norm[e^{-itH}P_{ac}(H)u_0][L^p_tL^q_x] \les \norm[u_0][2], \qquad
{\txt \frac2p + \frac{n}{q} = \frac{n}{2}}
\end{equation}
are valid for each exponent $p > 2$, as well as the Kato smoothing bound
\begin{equation} \label{eq:Kato}
\norm[g(x)|\nabla|^{1/2}e^{-itH}P_{ac}(H)u_0][L^2_tL^2_x]
\les \norm[g^2][X]^{\frac12} \norm[u_0][2].
\end{equation}
\end{theorem}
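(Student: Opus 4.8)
The plan is to split the proof into an analytic half and a harmonic-analytic half, joined by the limiting absorption principle. One first disposes of the routine structural facts: under \eqref{eq:Vdecay}--\eqref{eq:Adecay} the operator $H$ is self-adjoint and bounded below, its essential spectrum is $[0,\infty)$, Kato's theorem in the form of \cite{KoT06} rules out positive eigenvalues, and the negative spectrum is discrete; together with the hypothesis that $0$ is neither an eigenvalue nor a resonance, $P_{ac}(H)$ is well defined and the boundary values $R_H(\lambda\pm i0)=(H-\lambda\mp i0)^{-1}$ exist in a suitable weighted sense for every $\lambda>0$. I then reduce the theorem to two statements: (i) the uniform resolvent bound
\begin{equation*}
\sup_{\lambda>0}\ \Bignorm[\,|\nabla|^{1/2}R_H(\lambda\pm i0)|\nabla|^{1/2}][B\to B^*] < \infty,
\end{equation*}
which, by Kato's smoothing theory (the Stone formula combined with the fact, recorded after \eqref{eq:X}, that multiplication by an $X$-function carries $B^*$ into $\japanese[x]^{-1}L^2$), is equivalent to \eqref{eq:Kato}; and (ii) the Strichartz family \eqref{eq:Strichartz}, which I bootstrap from the free estimates \eqref{eq:freeStrich} with the help of (i).

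For (i) the input is the free resolvent: the Agmon--H\"ormander bounds give $R_0(\lambda\pm i0)\colon B\to B^*$ uniformly in $\lambda>0$, and their half-derivative refinement bounds $\norm[\,|\nabla|^{1/2}R_0(\lambda\pm i0)|\nabla|^{1/2}][B\to B^*]$ uniformly as well — the resolvent form of free Kato smoothing. The resolvent of $H$ comes from the second-resolvent identity written as $R_H(\lambda\pm i0)=(I+R_0(\lambda\pm i0)L)^{-1}R_0(\lambda\pm i0)$, so everything reduces to the uniform bounded invertibility, for $\lambda\in(0,\infty)$, of $I+R_0(\lambda\pm i0)L$ on a Banach space $\mathcal Y$ fitted simultaneously to the $B$--$B^*$ duality and to the half derivative. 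The zeroth-order part $V+|A|^2$ of $L$ is bounded, decays like $\japanese[x]^{-2}$, and vanishes at infinity by \eqref{eq:Vdecay}, so $R_0(\lambda\pm i0)(V+|A|^2)$ is compact on $\mathcal Y$ with norm tending to $0$ as $\lambda\to\infty$. The genuinely delicate term is the first-order part $i(A\cdot\nabla+\nabla\cdot A)$; here \eqref{eq:Adecay} is used precisely as the excerpt advertises. One never differentiates $A$, but distributes the derivative symmetrically so that $A$ appears only as a pointwise multiplier, and invokes that multiplication by $A\in X$ maps $B^*$ into $\japanese[x]^{-1}L^2$ and $\japanese[x]L^2$ into $B$, the leftover half derivative being absorbed by the smoothing refinement of $R_0(\lambda\pm i0)$. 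Since the $X$-norm controls both the magnitude and the decay of $A$, this makes $R_0(\lambda\pm i0)(A\cdot\nabla+\nabla\cdot A)$ compact on $\mathcal Y$, with small norm for $\lambda$ large (and for $A$ supported far out). Compactness and the Fredholm alternative then reduce invertibility to triviality of the kernel of $I+R_0(\lambda\pm i0)L$: for $\lambda>0$ this is the absence of embedded eigenvalues, and at $\lambda=0$ it is exactly the no-eigenvalue/no-resonance hypothesis; continuity of $\lambda\mapsto(I+R_0L)^{-1}$ up to the real axis together with the large-$\lambda$ smallness upgrades this to the uniform bound, which is (i), hence \eqref{eq:Kato}.

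For (ii) I run the Duhamel formula once,
\begin{equation*}
e^{-itH}P_{ac}(H)u_0 = e^{it\Delta}P_{ac}(H)u_0 - i\int_0^t e^{i(t-s)\Delta}L\,e^{-isH}P_{ac}(H)u_0\,ds.
\end{equation*}
The first term satisfies \eqref{eq:Strichartz} by \eqref{eq:freeStrich} and $\norm[P_{ac}(H)u_0][2]\le\norm[u_0][2]$. For the second term I apply the inhomogeneous retarded smoothing--Strichartz estimate for $e^{it\Delta}$ — obtained by pairing the free Kato bound against free Strichartz and passing to the retarded integral via the Christ--Kiselev lemma, which is where $p>2$ enters — and feed into it $L\,e^{-isH}P_{ac}(H)u_0$. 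The zeroth-order part of $L$ is controlled by its $\japanese[x]^{-2}$ decay together with \eqref{eq:Kato} applied to $e^{-isH}$. For the first-order part I again split the derivative: one half acts on $e^{-isH}P_{ac}(H)u_0$ and is absorbed by \eqref{eq:Kato}, while the other half is moved onto the free propagator $e^{i(t-s)\Delta}$ and absorbed by the dual free smoothing estimate, and the two halves are glued across the multiplier $A$ using once more the $B^*\to\japanese[x]^{-1}L^2$ mapping property — so that no regularity of $A$ is needed anywhere. Summing the contributions closes the estimate and establishes \eqref{eq:Strichartz}.

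The main obstacle is step (i): the uniform invertibility of $I+R_0(\lambda\pm i0)L$ when the perturbation contains the first-order term $i(A\cdot\nabla+\nabla\cdot A)$ with $A$ merely continuous. Earlier treatments avoided this by conjugating the magnetic term away with a pseudodifferential operator, which forced regularity assumptions on ${\rm div}\,A$ or $|\nabla|^{1/2}A$; the point here is to push the first-order perturbation directly through the resolvent identity on the Agmon--H\"ormander scale using only $A\in X$, and to check that the resulting operator is compact there — the compactness, not the mere boundedness, being the delicate assertion.
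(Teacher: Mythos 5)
There is a genuine gap, and it sits exactly at the point the theorem is designed to address. In both step (i) and step (ii) you handle the first-order term $i(A\cdot\nabla+\nabla\cdot A)$ by ``distributing the derivative symmetrically'' so that half a derivative is absorbed by the free resolvent (or free propagator) on one side of $A$ and half by the perturbed resolvent (or perturbed evolution) on the other side, with $A$ acting ``only as a pointwise multiplier.'' But in the compositions that actually occur, e.g.\ $R_0(\lambda)\,A\cdot\nabla\,R_H(\lambda)$ or $e^{i(t-s)\Delta}A\cdot\nabla\,e^{-isH}$, the full gradient sits on one side of the multiplier $A$ and the operator that is supposed to absorb the other half sits on the opposite side; rebalancing therefore forces you to move a factor $|\nabla|^{1/2}$ across multiplication by $A$, i.e.\ to control a commutator of $|\nabla|^{1/2}$ with $A$. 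Under (C2) the potential $A$ is merely continuous with finite $X$-norm and has no Sobolev regularity, so this commutation is unavailable --- it is precisely the $|\nabla|^{1/2}A$ (or ${\rm div}\,A$) hypothesis that the theorem eliminates, as the introduction points out. The paper's actual device is different: it factorizes $L$ using fractional \emph{time} derivatives, $Y_1=\langle x\rangle^{-1}\langle\partial_t\rangle^{1/4}$ and $Y_2=i\langle x\rangle\langle\partial_t\rangle^{-1/4}A\cdot\nabla$, which commute with the time-independent multiplier $A$; the perturbed resolvent bounds it needs then carry a full gradient on each side at the price of a $\langle\lambda\rangle^{1/2}$ loss (Lemma \ref{lem:midhighenergy}), compensated by the $\langle\lambda\rangle^{-1/4}$ coming from each $\langle\partial_t\rangle^{-1/4}$. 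This choice creates a new problem your scheme never meets --- the two-sided-in-time fractional derivatives do not commute with the retarded cutoff $0\le s\le t$ in Duhamel's formula --- and the bulk of Section \ref{sec:Proof} (the kernels $K_1$, $K_2$ in \eqref{eq:K1}--\eqref{eq:K2}, obtained by contour deformation of the resolvent) is devoted to controlling that commutator, on top of the Christ--Kiselev step you do include.

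Two secondary points. First, your high-energy claim that $R_0(\lambda\pm i0)(A\cdot\nabla+\nabla\cdot A)$ has small norm for large $\lambda$ is not justified: on the weighted/Agmon--H\"ormander scale the gradient costs exactly the $\lambda^{1/2}$ that the resolvent decay provides, so the first-order term is at best uniformly bounded, not small, and for large $A$ no Neumann-series argument closes; the paper instead imports the nontrivial high-energy bound of Proposition~4.3 of \cite{EGS07} and the intermediate-energy bound of \cite{IS06}, with \cite{KoT06} excluding embedded eigenvalues, reserving the Fredholm/compactness argument (your low-energy discussion, which is essentially correct in spirit) only for $\lambda\in[0,\lambda_0]$. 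Second, the uniform bound $\sup_{\lambda>0}\||\nabla|^{1/2}R_H(\lambda\pm i0)|\nabla|^{1/2}\|_{B\to B^*}<\infty$ that you take as the pivot of the whole proof is neither proved nor used in the paper; \eqref{eq:Kato} is obtained there by interpolating the two smoothing estimates of Lemma \ref{lem:Hsmooth}, which mix spatial weights with fractional time derivatives. Establishing your pivot estimate for merely continuous $A\in X$ would itself require a substitute for the derivative-splitting step, so the proposal as written does not close.
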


It is not possible to follow precisely in the footsteps of the free case.
In particular, no replacement for \eqref{eq:dispersive} is known in the case
$A \not= 0$, and to achieve the full range of exponents likely requires
additional regularity assumptions~\cite{GV06},~\cite{CCV08}.  There are two
alternative approaches to this problem, both of which attempt to confine
perturbative arguments to the more forgiving $L^2$ setting.

The first method is rooted in pseudo-differential calculus, decomposing the
solution into wave packets and finding suitable parametrices to describe
their respective trajectories.  This techniques were applied to
magnetic Schr\"odinger operators as early as~\cite{R92}.  More recent advances
have highlighted the flexibility to work on manifolds other than $\R^n$
(as in.~\cite{BT06}) and with time-varying coefficients~\cite{MMT07}.

We instead adopt the framework outlined in~\cite{RS04}, where Strichartz
estimates follow directly from a pair of Kato smoothing bounds, one each for
the free and perturbed propagators. The starting point is to express the
perturbed evolution according to Duhamel's formula, with $H = -\Delta + L$.
\begin{equation} \label{eq:Duhamel}
e^{-itH}u_0 = e^{it\Delta}u_0 - i\int_0^te^{i(t-s)\Delta}Le^{-isH}u_0\,ds
\end{equation}
The free evolution term is controlled by~\eqref{eq:freeStrich}.  To prove
a Strichartz estimate for the integral term, it typically suffices to find a
factorization 
\begin{equation*}
L = \sum_{j=1}^J Y_j^*Z_j
\end{equation*}
for which $Z_j$ is a smooth perturbation relative to $H$ on its absolutely
continuous subspace
and $Y_j$ is smooth relative to the Laplacian.  Following Kato's~\cite{K65}
theory of smoothing, this reduces to proving uniform estimates for the
resolvents of both $H$ and $-\Delta$.  

It is well known that $Y_j$ must not have
order greater than $\frac12$, and one expects the same to be true of $Z_j$.
The natural factorization of $L = A\cdot \nabla$ would therefore 
distribute half of a derivative to each of $Y_j$ and $Z_j$. 
Some regularity of the coefficients $A(x)$ is needed in order to commute
half a derivative across them.

To handle less smooth magnetic potentials, we take advantage of the parabolic
nature of the Schr\"odinger equation to replace powers of the gradient with
derivatives in the time direction.  Since $A(x)$ is time-independent, it
commutes with operators such as $\japanese[\partial_t]$ without 
apparent difficulty.  There is a price to be paid later for this convenience:
some of the precise structure of Duhamel's formula (i.e. $0<s<t$) is lost
in the process.  Another apparent sacrifice is that the polynomial decay
condition~\eqref{eq:Adecay} falls one-half a power short of the natural
scaling law $A(x) \sim |x|^{-1}$.

In the next section we choose a factorization for the first-order
perturbation found in~\eqref{eq:H} and prove preliminary smoothing bounds
for each term.  The task is not complicated, as many of the underlying
resolvent estimates appear in the literature in precisely the form needed.
Section~\ref{sec:Proof} contains the direct proof of Theorem~\ref{thm:main}.
The key ingredient is a set of modified smoothing estimates whose form is
dictated by the domain of integration in~\eqref{eq:Duhamel}.  
The technical issues
raised by our use of fractional time-derivatives are addressed here.

\section{Resolvent and Smoothing Estimates}

The magnetic and scalar potentials collectively perturb the Laplacian with
a differential operator of the form
\begin{align*}
L &= i(A \cdot \nabla + \nabla \cdot A) + |A|^2 + V \\
  &= \sum_{j=1}^3 Y_j^*Z_j
\end{align*}
where our factorization of choice is to take
\begin{equation} \label{eq:Y*Z}
\begin{aligned}
&Y_1 = \japanese[x]^{-1}\japanese[\partial_t]^{\frac14}, \ \ &
&Y_2 = i\japanese[x]\japanese[\partial_t]^{-\frac14}A\cdot \nabla, \ \ &
&Y_3 = \big|V +|A|^2\big|^{1/2}, \\
&Z_1 = Y_2, &
&Z_2 = Y_1, &
&Z_3 = \sign(V + |A|^2)Y_3.
\end{aligned}
\end{equation}
Operators $Y_1$ and $Y_2$ produce vector-valued functions, and could be
further split (if necessary) into their three coordinate directions. 
The first set of smoothing estimates for $Y_j$ relative to the Laplacian are
now easy to obtain.
\begin{lemma} \label{lem:H_0smooth}
Suppose $V(x)$ and $A(x)$ are bounded and 
satisfy~\eqref{eq:Vdecay},~\eqref{eq:Adecay}. 
Each of the operators $Y_j$, $j = 1, 2, 3$, satisfies the bound
\begin{equation} \label{eq:H_0smooth}
\norm[Y_je^{it\Delta}f][L^2(\R\times\R^n)] \les \norm[f][2]
\end{equation}
for all functions $f \in L^2(\R^n)$.  To prevent ambiguity, the first two
of these mapping estimates are interpreted as follows.
\begin{align}
\int_{\R^n} \japanese[x]^{-2} 
\bignorm[e^{it\Delta}f(\,\cdot\,,x)][H^{\frac14}(\R)]^2 \,dx
&\les \norm[f][2]^2 \\
\int_{\R^n} \japanese[x]^{2} 
\bignorm[A\cdot \nabla \big(e^{it\Delta}f\big)(\,\cdot\,,x)
][H^{-\frac14}(\R)]^2 \,dx &\les \norm[A][X]^2 \norm[f][2]^2
\end{align}
with the intermediate function $e^{it\Delta}f$ taking values at all times
$t \in \R$, both positive and negative.
\end{lemma}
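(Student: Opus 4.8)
The plan is to exploit the parabolic structure: because $e^{it\Lap}f$ is a free solution, its space--time Fourier transform is carried by the paraboloid $\{\tau+|\xi|^2=0\}$, so the time-derivative operators $\japanese[\partial_t]^{\pm1/4}$ act on it as spatial Fourier multipliers. This trades a quarter of a time derivative for half a spatial derivative and reduces all three bounds in \eqref{eq:H_0smooth} to classical Kato smoothing estimates for $e^{it\Lap}$.

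First I would record the identity $\japanese[\partial_t]^{\pm1/4}e^{it\Lap}f=e^{it\Lap}\sigma_\pm(\nabla)f$ with $\sigma_\pm(\xi)=(1+|\xi|^4)^{\pm1/8}$, which follows from $\widehat{e^{it\Lap}f}(t,\xi)=e^{-it|\xi|^2}\widehat f(\xi)$ and the fact that $t\mapsto e^{-it|\xi|^2}$ has time-Fourier transform a constant multiple of $\delta(\tau+|\xi|^2)$. For $f$ in the Schwartz class everything here converges absolutely and $t\mapsto(e^{it\Lap}f)(x)$ lies in $H^{\pm1/4}(\R)$ for each $x$; the case $f\in L^2$ then follows by density, and the two restated inequalities are the unpacked forms of the $j=1,2$ cases of \eqref{eq:H_0smooth}. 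The key point will be the factorization $\sigma_\pm(\xi)=m_\pm(\xi)\japanese[\xi]^{\pm1/2}$, where $m_\pm$ is smooth, bounded above and below, and a symbol of order zero; hence $m_\pm(\nabla)$ is bounded on $L^2$.

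For $Y_1$ this gives $Y_1e^{it\Lap}f=\japanese[x]^{-1}\japanese[\nabla]^{1/2}e^{it\Lap}h$ with $h=m_+(\nabla)f$, $\norm[h][2]\les\norm[f][2]$, and one applies the standard bound $\|\japanese[x]^{-1}\japanese[\nabla]^{1/2}e^{it\Lap}h\|_{L^2(\R\times\R^n)}\les\norm[h][2]$: its high-frequency part is the sharp smoothing estimate $\||\nabla|^{1/2}e^{it\Lap}h\|_{L^2_t(B^*)_x}\les\norm[h][2]$ of Kato--Yajima / Ben-Artzi--Klainerman type, and its low-frequency part is the $n\ge3$ local smoothing estimate $\||x|^{-1}e^{it\Lap}h\|_{L^2(\R\times\R^n)}\les\norm[h][2]$, which controls the weight since $\japanese[x]^{-1}\le|x|^{-1}$. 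For $Y_3$, conditions \eqref{eq:Vdecay}--\eqref{eq:Adecay} force $|V+|A|^2|\les\japanese[x]^{-2}$, so $Y_3=|V+|A|^2|^{1/2}\les\japanese[x]^{-1}\le|x|^{-1}$ and the same $|x|^{-1}$-weighted estimate yields $\|Y_3e^{it\Lap}f\|_{L^2(\R\times\R^n)}\les\norm[f][2]$. (Equivalently, multiplication by any $\japanese[x]^{-1}$-decaying function carries $B^*$ into $L^2$, a one-line consequence of the dyadic-shell definition of $B^*$.)

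The one place where the spaces $B,B^*$ are genuinely needed is $Y_2$, and this is the step requiring care. Since $A$ is time-independent it commutes with $\japanese[\partial_t]^{-1/4}$, so $Y_2e^{it\Lap}f=i\japanese[x]A(x)\cdot e^{it\Lap}(\nabla\sigma_-(\nabla)f)$, and the symbol $i\xi(1+|\xi|^4)^{-1/8}$ of $\nabla\sigma_-(\nabla)$ is comparable to $|\xi|$ at low frequency and to $|\xi|^{1/2}$ at high frequency. Splitting $f$ into frequencies $\lesssim1$ and $\gtrsim1$ and pulling a factor $|\nabla|^{1/2}$ out of each piece, one writes $\nabla\sigma_-(\nabla)f=|\nabla|^{1/2}h$ with $\norm[h][2]\les\norm[f][2]$, and the bound reduces to
\begin{equation*}
\big\|\japanese[x]A\cdot e^{it\Lap}(|\nabla|^{1/2}h)\big\|_{L^2(\R\times\R^n)}\les\norm[A][X]\,\norm[h][2].
\end{equation*}
This is exactly the composition of the mapping property recorded after \eqref{eq:X} --- multiplication by $A\in X$ sends $B^*$ into $\japanese[x]^{-1}L^2$, i.e.\ $\japanese[x]A\cdot\colon B^*\to L^2$ with norm $\les\norm[A][X]$ --- with the sharp smoothing estimate $\||\nabla|^{1/2}e^{it\Lap}h\|_{L^2_t(B^*)_x}\les\norm[h][2]$. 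The growing weight $\japanese[x]$ is precisely why a polynomial weight is not enough here and $B^*$ is forced. No single step is hard, as the resolvent bounds behind these smoothing estimates are available in the literature in the stated form; the only real care is needed in the passage from $\japanese[\partial_t]^{\pm1/4}$ to $\japanese[\nabla]^{\pm1/2}$ (handled by the paraboloid identity together with a density argument) and in routing $Y_2$ through $B^*$ rather than a weighted $L^2$.
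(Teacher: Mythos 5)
Your argument is correct, and it reaches the same underlying facts as the paper by a somewhat different route. The paper sets up the $TT^*$ operator, takes the Fourier transform in $t$, and absorbs the weights $\japanese[\lambda]^{\pm\frac14}$ coming from $\japanese[\partial_t]^{\pm\frac14}$ into the uniform weighted resolvent bounds \eqref{eq:freeresbounds} (Agmon/Simon for the $\japanese[x]^{-1}$ estimate, H\"ormander plus $\Delta R_0^\pm(\lambda)=-I-\lambda R_0^\pm(\lambda)$ for the gradient/$X$-weighted one). You instead exploit the exact identity $\japanese[\partial_t]^{\pm\frac14}e^{it\Delta}f=e^{it\Delta}(1+|\nabla|^4)^{\pm\frac18}f$, valid because the space--time Fourier transform of a free solution lives on $\{\tau+|\xi|^2=0\}$, and then quote the classical $|x|^{-1}$-weighted Kato--Yajima smoothing estimate and the sharp half-derivative local smoothing estimate in the dyadic-shell ($B^*$-type) form, combined with the multiplication properties of $X$ and of $\japanese[x]^{-1}$ on $B^*$. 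Since those smoothing estimates are themselves equivalent, via $TT^*$, to the very resolvent bounds the paper cites, the content is the same; what your version buys is transparency (no re-derivation of the $TT^*$ reduction, and the trade ``quarter time derivative $=$ half space derivative'' is made explicit), while the paper's formulation in terms of resolvents is the one that survives when the free flow is replaced by $e^{-itH}$ in Lemma~\ref{lem:Hsmooth}, where no paraboloid identity is available. Two small points of care: the sharp smoothing estimate should be used in the mixed-norm order $\sup_j 2^{-j/2}\|\,\cdot\,\|_{L^2(\R\times D_j)}$ (shell supremum outermost), not literally $L^2_t(B^*)_x$; your composition with the $X$-multiplication bound does go through in that order, since the bound $\|\japanese[x]Ag\|_{L^2_{t,x}}\les\norm[A][X]\sup_j 2^{-j/2}\norm[g][L^2(\R\times D_j)]$ follows by summing over shells exactly as in the time-independent case. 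Also, your pointwise bound $|A(x)|\les\japanese[x]^{-3/2}$ from $\norm[A][X]<\infty$, used for $Y_3$, is correct and consistent with the paper's remark that \eqref{eq:Adecay} overshoots the scaling-critical decay by half a power.
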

\begin{proof}
Let $\lambda$ represent the Fourier variable dual to $t$.  After setting up the
$TT^*$ operator and taking the Fourier transform in $T$, 
the smoothing bounds for $Y_1$, $Y_2$, $Y_3$ are 
essentially equivalent to these  
properties of the free resolvent.
\begin{equation} \label{eq:freeresbounds} 
\begin{aligned}
\bignorm[\japanese[x]^{-1}\big(\Rplus_0(\lambda)-\Rminus_0(\lambda)\big)
  \japanese[x]^{-1}f][2] &\les \japanese[\lambda]^{-\frac12}\norm[f][2] \\
\bignorm[\japanese[x]g\nabla 
   \big(\Rplus_0(\lambda)-\Rminus_0(\lambda)\big) 
   \nabla \japanese[x]gf][2] 
   &\les \japanese[\lambda]^{\frac12} \norm[g][X]^2 \norm[f][2] \\
\end{aligned}
\end{equation}
uniformly over $\lambda \ge 0$ with any exponent $\sigma \ge 1$.  
We have adopted the shorthand notation for resolvents
\begin{equation} \label{eq:resolventdef}
\begin{aligned}
R_L^\pm(\lambda) &= \lim_{\eps\downarrow 0}\big(H-(\lambda\pm i\eps)\big)^{-1}
\\
R_0^\pm(\lambda) &= \lim_{\eps\downarrow 0}
  \big(-\Delta - (\lambda\pm i\eps)\big)^{-1}
\end{aligned}
\end{equation}
The first 
inequality can be found in~\cite{A75}, and a particularly sharp version
appears in \cite{S92}.  The second is a standard result in~\cite{H85},
together with the observation that $\Delta R_0^\pm(\lambda) = 
-I -\lambda R_0^\pm(\lambda)$.
\end{proof}

In order to show that each $Z_j$ is a smooth perturbation relative to 
(the absolutely continuous part of) $H$,
it suffices to verify that the resolvent estimates in~\eqref{eq:freeresbounds}
continue to hold when
$R_0^\pm(\lambda)$ is replaced by $R_L^\pm(\lambda)$.  It is convenient to
break the problem into three separate regimes according to whether
$\lambda \ll 1$, $\lambda \sim 1$, or $\lambda \gg 1$.  The latter two cases
have been considered at length elsewhere, so we are content to collect
these results into a single statement.
\begin{lemma} \label{lem:midhighenergy}
Let $A$ be a vector field and $V$ a scalar function 
satisfying conditions~\eqref{eq:Vdecay}-\eqref{eq:Adecay}.
Given any number $\lambda_0 > 0$, there exists a constant 
$C(L,\lambda_0)<\infty$ such that
\begin{equation} \label{eq:midhighenergy}
\begin{aligned}
\bignorm[\japanese[x]^{-1}\big(\Rplus_L(\lambda)-\Rminus_L(\lambda)\big)
  \japanese[x]^{-1}f][2] 
 &\le C(L,\lambda_0) \japanese[\lambda]^{-\frac12}\norm[f][2] \\
\bignorm[\japanese[x]g \nabla
   \big(\Rplus_L(\lambda)-\Rminus_L(\lambda)\big)
   \nabla \japanese[x]gf][2]
   &\le C(L,\lambda_0) \japanese[\lambda]^{\frac12}\norm[g][X]^2\norm[f][2]
\end{aligned}
\end{equation}
uniformly over all $\lambda > \lambda_0$.
\end{lemma}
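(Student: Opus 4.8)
The plan is to derive the perturbed bounds from the free ones in~\eqref{eq:freeresbounds} by means of the symmetric resolvent identity attached to the factorization~\eqref{eq:Y*Z}. Arrange the operators of Lemma~\ref{lem:H_0smooth} into column vectors $\mathbf Y = (Y_1,Y_2,Y_3)$ and $\mathbf Z = (Z_1,Z_2,Z_3)$, so that $L = \mathbf Y^{*}\mathbf Z$, and write
\[
R_L^{\pm}(\lambda) = R_0^{\pm}(\lambda) - R_0^{\pm}(\lambda)\mathbf Y^{*}\bigl(I + \mathbf Z R_0^{\pm}(\lambda)\mathbf Y^{*}\bigr)^{-1}\mathbf Z R_0^{\pm}(\lambda).
\]
Subtracting the $+$ and $-$ boundary values, the leading term $R_0^{+}(\lambda) - R_0^{-}(\lambda)$ already obeys both inequalities of~\eqref{eq:midhighenergy} by~\eqref{eq:freeresbounds}. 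In each remaining term the two outer factors $\langle x\rangle^{-1}R_0^{\pm}(\lambda)\mathbf Y^{*}$ and $\mathbf Z R_0^{\pm}(\lambda)\langle x\rangle^{-1}$ — and likewise the versions carrying a gradient and the weight $\langle x\rangle g$ — are $O(\langle\lambda\rangle^{-1/4})$ (respectively $O(\langle\lambda\rangle^{1/4})$) uniformly in $\lambda > 0$; this is just the $TT^{*}$ computation underlying Lemma~\ref{lem:H_0smooth}, applied now to the individual resolvents $R_0^{\pm}(\lambda)$ rather than their difference, and it is here that $A \in X$ and the decay of $V$ in~\eqref{eq:Vdecay} are used to make those factors finite. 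Hence every correction term is $O(\langle\lambda\rangle^{-1/2})$ (respectively $O(\langle\lambda\rangle^{1/2})$, after rewriting $\Delta R_L^{\pm}(\lambda) = -I - \lambda R_L^{\pm}(\lambda) + L R_L^{\pm}(\lambda)$ exactly as in the proof of Lemma~\ref{lem:H_0smooth}) times $\bigl\|(I + \mathbf Z R_0^{\pm}(\lambda)\mathbf Y^{*})^{-1}\bigr\|$, and the whole lemma reduces to bounding this last quantity uniformly for $\lambda > \lambda_0$. I would handle the compact range $\lambda_0 < \lambda \le \lambda_1$ and the tail $\lambda > \lambda_1$ separately, with $\lambda_1$ large.

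On the high-energy tail one should not expect a Neumann series. Under the parabolic rescaling $x \mapsto \lambda^{-1/2}x$ the electric part $V + |A|^{2}$ does become small, but the weighted norm of the first-order term $\langle x\rangle A\cdot\nabla$ is essentially scale invariant, so $\mathbf Z R_0^{\pm}(\lambda)\mathbf Y^{*}$ need not be small in operator norm. What does hold — and is by now classical for magnetic Schr\"odinger operators satisfying~\eqref{eq:Vdecay}--\eqref{eq:Adecay} — is the mid-to-high-energy resolvent estimate itself, proved by semiclassical or positive-commutator methods in which the first-order term is handled within the symbol calculus, going back to~\cite{R92}; the smoothing framework of~\cite{RS04} is organized around exactly this input. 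I would simply cite it: it is equivalent to the uniform bound on $\bigl\|(I + \mathbf Z R_0^{\pm}(\lambda)\mathbf Y^{*})^{-1}\bigr\|$ for $\lambda$ large, hence to~\eqref{eq:midhighenergy} on $\lambda > \lambda_1$.

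On the compact range $\lambda_0 < \lambda \le \lambda_1$ I would invert $I + \mathbf Z R_0^{\pm}(\lambda)\mathbf Y^{*}$ by Fredholm theory. Each entry $Z_i R_0^{\pm}(\lambda)Y_j^{*}$ is compact on $L^{2}(\R^{n})$: the free resolvent supplies local elliptic smoothing of order up to $2$, cancelling the at most two derivatives carried by the $Y$'s and $Z$'s, and the surviving weights decay at infinity because $A \in X$ forces $A(x) = o(|x|^{-3/2})$ and~\eqref{eq:Vdecay} gives $|x|^{2}V(x) \to 0$, so that $\langle x\rangle A$ and $\langle x\rangle^{2}(V + |A|^{2})$ vanish as $|x| \to \infty$ and Rellich's compactness theorem applies; this is the one place where~\eqref{eq:Vdecay}--\eqref{eq:Adecay} enter at full strength, and it is also where the lack of any Sobolev regularity for $A$ has to be absorbed. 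Thus $I + \mathbf Z R_0^{\pm}(\lambda)\mathbf Y^{*}$ is Fredholm of index zero and invertible iff injective, while a nontrivial null vector would give a distributional solution of $H\psi = \lambda\psi$ in a weighted $L^{2}$ space obeying the outgoing (respectively incoming) radiation condition — a positive embedded eigenvalue or positive-energy resonance of $H$. Neither occurs: embedded eigenvalues are excluded for this class of potentials by~\cite{KoT06}, and any positive-energy resonance is killed by the Rellich uniqueness theorem, which applies because $V$ and $A$ decay. Hence $I + \mathbf Z R_0^{\pm}(\lambda)\mathbf Y^{*}$ is invertible at every $\lambda > 0$, with inverse depending norm-continuously on $(\lambda, \pm)$, so uniformly bounded on the compact set $[\lambda_0,\lambda_1] \times \{+,-\}$; together with the high-energy bound this establishes~\eqref{eq:midhighenergy}. (The hypothesis of Theorem~\ref{thm:main} that zero be neither an eigenvalue nor a resonance plays no role in this lemma; only the absence of positive embedded eigenvalues is needed here.)
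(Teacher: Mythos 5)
Your plan breaks down at both of its main steps. First, the Fredholm argument on the compact energy range rests on the claim that every entry $Z_iR_0^{\pm}(\lambda)Y_j^{*}$ is compact on $L^2$, and that is false for the entry carrying a gradient on each side: after the $\langle\lambda\rangle^{\mp 1/4}$ factors cancel, $Z_1R_0^{\pm}(\lambda)Y_2^{*}$ is $\langle x\rangle A\cdot\nabla R_0^{\pm}(\lambda)\nabla\cdot\big(A\langle x\rangle\,\cdot\,\big)$, which uses up all two orders of smoothing of the free resolvent. Modulo genuinely compact errors it is $w\,R_jR_k\,w$ with $w=\langle x\rangle A$ and $R_j$ Riesz transforms, i.e.\ a zeroth-order Fourier multiplier whose symbol does not vanish as $|\xi|\to\infty$ sandwiched between decaying weights; testing on modulations $f_k=e^{ix\cdot\xi_k}g$, $|\xi_k|\to\infty$, gives $w\,m(D)\,w f_k\approx m(\xi_k/|\xi_k|)\,w^2f_k$, which does not tend to $0$ in norm although $f_k\rightharpoonup 0$, so the operator is not compact (and it is not small either: its norm is comparable to $\|A\|_X^2$, the $\langle\lambda\rangle$ powers having cancelled). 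So $I+\mathbf{Z}R_0^{\pm}\mathbf{Y}^{*}$ is not a compact perturbation of the identity on $L^2$ and the Fredholm alternative does not apply. This is exactly the structural difficulty created by a rough first-order term; the paper's own Fredholm-type argument (used only at low energy, Lemma~\ref{lem:lowenergy}) is set in $Y^{*}=\langle x\rangle L^2\cap B_1^{*}$, whose built-in derivative prevents a gradient from landing on both sides of $R_0$, and for the range $[\lambda_0,\lambda_1]$ the paper instead quotes Theorem~1.3 of \cite{IS06} together with \cite{KoT06} for the absence of embedded eigenvalues.

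Second, the high-energy bound cannot be dismissed as classical. For $A$ that is merely continuous with $\|A\|_X<\infty$, the large-$\lambda$ limiting absorption principle with a first-order term is a substantive recent result; the methods you invoke --- the symbol calculus of \cite{R92} and positive-commutator/Mourre arguments --- require derivatives of $A$ (commutation with $x\cdot\nabla$ produces $\nabla A$), which is precisely the regularity this paper is written to avoid, and \cite{RS04} treats scalar potentials and supplies no magnetic resolvent input. The estimate you need is Proposition~4.3 of \cite{EGS07}, which gives $R_L^{\pm}(\lambda):B\to B^{*}$ with norm $O(\lambda^{-1/2})$ and $B_{-1}\to B_1^{*}$ with norm $O(\lambda^{1/2})$ for $\lambda$ large; the paper's proof of this lemma is essentially that citation plus the two just mentioned. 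A further, more minor, gap: your reduction of the second inequality via $\Delta R_L^{\pm}=-I-\lambda R_L^{\pm}+LR_L^{\pm}$ is not ``exactly as in Lemma~\ref{lem:H_0smooth}'', because the extra term $LR_L^{\pm}$ again contains $A\cdot\nabla R_L^{\pm}$ and needs its own estimate. (Your closing observation that the zero-energy hypothesis is irrelevant for this lemma is correct.)
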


\begin{proof}
According to Proposition~4.3 of~\cite{EGS07}, there exists a number 
$\lambda_1(L) < \infty$ such that the perturbed resolvents $R_L^\pm(\lambda)$
satisfy the bounds
\begin{align*} 
\norm[R_L^\pm(\lambda)][B\to B^*] &\le C\lambda^{-\frac12}\\
\norm[R_L^\pm(\lambda)][B_{-1}\to B_1^*] &\le C\lambda^{\frac12}
\end{align*}
for all $\lambda > \lambda_1$.  A uniform bound over the interval
$\lambda \in [\lambda_0, \lambda_1]$ is proved as part of Theorem~1.3 
in~\cite{IS06}.  It is assumed there that no eigenvalues are embedded in
the positive half-line, a condition which was subsequently shown 
in~\cite{KoT06} to hold for the entire class of potentials under consideration.
\end{proof}

\begin{remark}
The mapping bound from $B$ to $B^*$ is considerably stronger than what is
needed to prove Lemma~\ref{lem:midhighenergy}.  Alternatively, by projecting
onto the range $\lambda \in [\lambda_0,\infty)$ rather than the entire
continuous spectrum, one can obtain smoothing estimates while assuming 
less decay of the potentials.  It should suffice to let $A,V$ belong to
the space
\begin{equation*}
\tilde{X} := \Big\{g \in L^\infty: \sum_{j=0}^\infty\norm[g][L^\infty(D_j)]
 < \infty \Big\}.
\end{equation*}
\end{remark}

The remaining short interval $\lambda \in [0,\lambda_1]$ is handled by a
compactness argument.  By applying resolvent identities, the difference
$\Rplus_L(\lambda)-\Rminus_L(\lambda)$ can  be expressed as
\begin{equation*}
\Rplus_L(\lambda)-\Rminus_L(\lambda) =
(I + \Rplus_0(\lambda)L)^{-1}\big(\Rplus_0(\lambda)-\Rminus_0(\lambda)\Big)
(I + L\Rminus_0(\lambda))^{-1}
\end{equation*}
It is already established (see~\eqref{eq:freeresbounds}) that the
difference of free resolvents maps the space
$Y := \japanese[x]^{-1}L^2(\R^n) + B_{-1}$, equipped with the norm
\begin{equation} \label{eq:Y}
\norm[f][Y] = \inf\Big\{\norm[\japanese[x]f_1][2] + 
\norm[f_2][B_{-1}]:\ f_1 + f_2= f \Big\}
\end{equation} 
to its dual $Y^* = \japanese[x]L^2 \cap B_1^*$ with uniformly
bounded operator norm.  We therefore need only to show that
$(I + \Rplus_0(\lambda)L)^{-1}$ exists as a uniformly bounded family of
operators on $Y^*$ over this range of $\lambda$.  The same will be true of
the other inverse by duality.

\begin{lemma} \label{lem:lowenergy}
Let $V$ and $A$ satisfy conditions~\eqref{eq:Vdecay}-\eqref{eq:Adecay}
and suppose that the Schr\"odinger operator $H$ does not have an eigenvalue
or resonance at zero.  Then
\begin{equation*}
\sup_{\lambda \in [0,\lambda_0]} 
\bignorm[(I + \Rplus_0(\lambda)L)^{-1}][Y^*\to Y^*] < \infty.
\end{equation*}
\end{lemma}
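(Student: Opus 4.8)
The plan is to regard $I+R_0^+(\lambda)L$ as a norm‑continuous family of Fredholm operators on $Y^*$, indexed by the compact interval $[0,\lambda_0]$, and to deduce the uniform bound on the inverses from invertibility at each individual $\lambda$. The first task is to check that $R_0^+(\lambda)L$ is a bounded operator on $Y^*$ for every $\lambda\in[0,\lambda_0]$; this is where the decay conditions enter. On the one hand, $L=i(A\cdot\nabla+\nabla\cdot A)+|A|^2+V$ maps $Y^*$ into $Y$: the hypothesis $\langle x\rangle^2 V\in L^\infty$ shows that multiplication by $V$ (and, since $\||A|^2\|_X\le\|A\|_{L^\infty}\|A\|_X<\infty$, also by $|A|^2$) carries $\langle x\rangle L^2\supset Y^*$ into $\langle x\rangle^{-1}L^2\subset Y$, while the two multiplication properties of $X$‑functions recorded after \eqref{eq:X} dispose of the first‑order part --- for $f\in Y^*\subset B_1^*$ one has $\nabla f\in B^*$ componentwise and hence $A\cdot\nabla f\in\langle x\rangle^{-1}L^2$, and for $f\in Y^*\subset\langle x\rangle L^2$ one has $Af\in B$ and hence $\nabla\cdot(Af)\in B_{-1}$ after a bounded Riesz projection. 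On the other hand $R_0^+(\lambda)\colon Y\to Y^*$ is bounded uniformly on $[0,\lambda_0]$ by the limiting absorption principle for $-\Delta$, which for $n\ge 3$ extends without blow‑up down to $\lambda=0$ (this is the half of \eqref{eq:freeresbounds} that survives without taking the difference of $R^+$ and $R^-$). Composing, $R_0^+(\lambda)L$ is bounded on $Y^*$.

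The two substantive points are compactness and continuity. For compactness of $R_0^+(\lambda)L$ on $Y^*$ at fixed $\lambda$, I would use the ``little‑$o$'' character of the hypotheses: $\limsup_{|x|\to\infty}|x|^2|V(x)|=0$ together with the convergence of the series defining $\|A\|_X$ and the continuity of $A$ let $L$ be approximated, in the $Y^*\to Y$ operator norm, by compactly supported smooth perturbations $L_k$, for which $R_0^+(\lambda)L_k$ is plainly compact on $Y^*$; a norm limit of compact operators is compact. For continuity of $\lambda\mapsto R_0^+(\lambda)L$ in $\mathcal B(Y^*)$ on $[0,\lambda_0]$, including at the origin, I would invoke the low‑energy expansion of the free resolvent: $R_0^+(\lambda)-R_0^+(0)$ is $O(\lambda^{1/2})$ (with a logarithmic correction when $n$ is even) as an operator between the relevant weighted/Besov spaces, so composing on the right with the fixed map $L\colon Y^*\to Y$ yields a norm‑continuous family. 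Consequently $\{R_0^+(\lambda)L\}_{\lambda\in[0,\lambda_0]}$ is a norm‑continuous family of compact operators on $Y^*$, and $I+R_0^+(\lambda)L$ is Fredholm of index zero throughout.

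It remains to establish injectivity of $I+R_0^+(\lambda)L$ on $Y^*$ for each $\lambda\in[0,\lambda_0]$. At $\lambda=0$ this is exactly the standing hypothesis: a nonzero $\psi\in Y^*$ with $\psi=-R_0^+(0)L\psi$ is, by definition, a zero‑energy eigenfunction of $H$ (if $\psi\in L^2$) or a zero‑energy resonance, both of which are excluded. For $\lambda>0$, if $(I+R_0^+(\lambda)L)\psi=0$ with $\psi\in Y^*$, then $\psi=-R_0^+(\lambda)L\psi$ solves $(-\Delta-\lambda)\psi=-L\psi$ and satisfies the outgoing radiation condition; since $L\psi$ has enough decay, the usual Agmon bootstrap together with Rellich's theorem and unique continuation forces $\psi\in L^2$, whence $H\psi=\lambda\psi$ --- impossible, because $H$ has no eigenvalues embedded in $(0,\infty)$, the property of the full class of potentials established in \cite{KoT06} and already quoted in the proof of Lemma~\ref{lem:midhighenergy}. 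Fredholm of index zero plus injectivity gives invertibility of $I+R_0^+(\lambda)L$ on $Y^*$ for every $\lambda\in[0,\lambda_0]$; since inversion is continuous on the open set of invertible operators and $[0,\lambda_0]$ is compact, $\sup_{\lambda\in[0,\lambda_0]}\|(I+R_0^+(\lambda)L)^{-1}\|_{Y^*\to Y^*}<\infty$, as claimed.

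I expect the main obstacle to be the first‑order term $i(A\cdot\nabla+\nabla\cdot A)$, whose coefficient is merely continuous with no Sobolev regularity: one cannot simplify it to $2iA\cdot\nabla+i(\nabla\cdot A)$, so it must be kept in symmetric form, and both the compactness of $R_0^+(\lambda)(A\cdot\nabla+\nabla\cdot A)$ on $Y^*$ and the $O(\lambda^{1/2})$ low‑energy continuity must be read off the kernel of $\nabla R_0^+(\lambda)$ --- behaving like $\nabla_x|x-y|^{2-n}$ near $\lambda=0$ --- paired against $A\in X$ and against the $B/B^*$ scale, rather than through any crude weighted‑$L^2$ bound. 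This is also where the borderline nature of the weight shows up: in the low dimensions $n=3,4$ the naive estimate for $\langle x\rangle^{\pm1}R_0^+(0)\langle x\rangle^{\pm1}$ has a logarithmic tail, which is precisely why one must work with the spaces $B$, $B^*$, $B_{\pm1}$ and with the composite space $Y$ of \eqref{eq:Y} rather than with ordinary polynomial weights.
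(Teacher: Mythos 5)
Your overall skeleton --- $L$ compact from $Y^*$ to $Y$, Fredholm alternative, injectivity for $\lambda>0$ from absence of embedded eigenvalues and at $\lambda=0$ from the spectral hypothesis --- coincides with the paper's. The gap is in the step that converts pointwise invertibility into the uniform bound: you assert that $\lambda\mapsto \Rplus_0(\lambda)L$ is norm-continuous in $\mathcal{B}(Y^*)$ on $[0,\lambda_0]$, justified by an $O(\lambda^{1/2})$ expansion of $\Rplus_0(\lambda)-\Rplus_0(0)$ ``between the relevant weighted/Besov spaces''. That expansion is not available at these weights: in $n=3$ the coefficient of $\lambda^{1/2}$ is the rank-one operator with constant kernel, whose boundedness from the weighted-$L^2$ component of $Y$ to that of $Y^*$ requires $\japanese[x]^{-1}\in L^2(\R^n)$, i.e.\ weights heavier than $\japanese[x]^{n/2}$, whereas $Y$ and $Y^*$ carry only the borderline weight $\japanese[x]^{\pm1}$ plus the $B_{\pm1}$ scale. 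More decisively, norm continuity (indeed even strong continuity) fails in the $B^*/B_1^*$ component: for a fixed compactly supported $y$ and nearby $\lambda,\Lambda>0$, the function $(\Rplus_0(\lambda)-\Rplus_0(\Lambda))y$ behaves like $|x|^{-(n-1)/2}$ times $e^{i(\sqrt\lambda-\sqrt\Lambda)|x|}-1$, which is of unit size on every dyadic shell $|x|\gtrsim|\sqrt\lambda-\sqrt\Lambda|^{-1}$; since $\norm[\cdot][B^*]$ is a supremum over shells, $\norm[(\Rplus_0(\lambda)-\Rplus_0(\Lambda))y][B^*]$ does not tend to zero as $\lambda\to\Lambda$. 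Composing on the right with the compact operator $L$ does not cure this (a finite-rank approximation of $L$ reduces norm continuity of $\Rplus_0(\lambda)L$ to exactly this strong continuity on the range of $L$). So the concluding appeal to ``inversion is continuous on the invertibles, plus compactness of $[0,\lambda_0]$'' is not justified, and this is not a detail one can wave through: it is the reason the limiting absorption theory in the $B/B^*$ scale is stated with weak-$*$ rather than norm continuity.

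The paper's proof is designed around this obstruction. It uses only the weak-$*$ continuity of $\Rplus_0(\lambda)$ (tested against compactly supported approximations) and obtains uniformity by contradiction: if $\norm[(I+\Rplus_0(\lambda_n)L)^{-1}][Y^*\to Y^*]\to\infty$, choose unit vectors $f_n\in Y^*$ with $\norm[f_n+\Rplus_0(\lambda_n)Lf_n][Y^*]\to0$, pass to $\lambda_n\to\Lambda$ and $f_n\to f_\infty$ weak-$*$; compactness of $L$ upgrades $Lf_n\to Lf_\infty$ to norm convergence in $Y$, weak-$*$ continuity then identifies $f_\infty$ as a solution of the eigenfunction equation, forcing $f_\infty=0$; but then $\norm[Lf_n][Y]\to0$ makes $\norm[f_n+\Rplus_0(\lambda_n)Lf_n][Y^*]\to1$, a contradiction. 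To repair your argument you should replace the norm-continuity step with a sequential argument of this type, exploiting compactness of $L$ at the level of sequences rather than through operator-norm limits; the rest of your proposal (mapping and compactness properties of $L$, the $\lambda>0$ injectivity via an Agmon--Rellich-type bootstrap together with the absence of embedded eigenvalues, and the role of the zero-energy hypothesis) is in agreement with the paper.
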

\begin{proof}
Any potential $V \in \japanese[x]^{-2}L^\infty$ satisfying~\eqref{eq:Vdecay}
or $A \in X$ can be approximated in the appropriate norm by a function
with compact support.  As a consequence, $L$ acts as a compact operator taking
$Y^*$ to $Y$.  Meanwhile, the resolvent 
$R_0^\pm(\lambda)$ maps $Y$ back to $Y^*$.  The end result is that 
$I + \Rplus_0(\lambda)L$ is always a compact perturbation of the identity.

The Fredholm Alternative asserts that $(I + \Rplus_0(\lambda)L)$ must possess
a bounded inverse unless there there is a nontrivial null-space consisting
of functions $f \in Y^*$ that solve the equation $f = -\Rplus_0(\lambda)Lf$.
Any such $f$ would also be a distributional solution of the eigenfunction
equation
\begin{equation*}
-\Delta f + Lf = \lambda f.
\end{equation*}
The combined efforts of~\cite{KoT06} and~\cite{IS06} rule out their existence
for each $\lambda >0$.  To avoid them when $\lambda = 0$ we must make
include an {\it a priori} assumption
that zero is not an eigenvalue or resonance of $H$.  The need to exclude
resonances stems from the fact that $Y^*$ includes functions that do not
decay rapidly enough to belong to $L^2(\R^n)$.  

\begin{remark}
In~\cite{JK79} the presence of an eigenvalue at $\lambda = 0$ is shown to
nullify some Strichartz estimates even after projecting away from the
associated eigenvector with $P_{ac}(H)$.
\end{remark}

Now that $(I + \Rplus_0(\lambda)L)^{-1}$ exists at each $\lambda \in
[0,\lambda_0]$, the uniform bound on their norms follows from continuity
with respect to $\lambda$. More precisely, the resolvents $\Rplus_0(\lambda)$
enjoy the weak-* continuity property described below.   
Given two functions $f, g \in Y$ and $\Lambda \in [0,\infty)$,
\begin{equation*}
\lim_{\lambda\to\Lambda} \la \Rplus_0(\lambda)f, g\ra 
  = \la \Rplus_0(\Lambda)f, g\ra.
\end{equation*}
The main idea here is again that $f$ and $g$ can be approximated by compactly
supported functions, and that the integration kernel of 
$\Rplus_0(\Lambda) - \Rplus_0(\lambda)$ vanishes pointwise on bounded sets.

Now suppose there were a sequence of values $\lambda_n$ for which
the norm of $(I + \Rplus_0(\lambda_n)L)^{-1}$ grew without bound.  This would
imply the existence of unit functions $f_n \in Y^*$ for which
$\norm[f_n + \Rplus_0(\lambda_n)f_n][Y^*] \to 0$.  By passing to a subsequence
we may assume that $\lambda_n \to \Lambda$ and that $f_n$ converges in the
weak-* topology to a function $f_\infty\in Y^*$.

Recall that $L$ is a compact operator, which means that 
$Lf_n$ should converge to $Lf_\infty \in Y$ in norm.
By the weak-*continuity of resolvents, it follows that 
$\Rplus_0(\Lambda)Lf_\infty$ is the weak-* limit of $\Rplus_0(\lambda_n)Lf_n$.
This sequence is sufficiently close to $-f_n$ that
$-\Rplus_0(\Lambda)Lf_\infty$ must be a weak-* limit of $f_n$
This makes $f_\infty$ a solution to the eigenfunction equation, so
$f_\infty \equiv 0$.

The contradiction arises because now we have $\norm[Lf_n][Y] \to 0$,
which would cause $\norm[f_n + \Rplus_0(\lambda_n)f_n][Y^*] \to 1$,
violating the part of the construction where
$\norm[f_n + \Rplus_0(\lambda_n)Lf_n][] \to 0$.
\end{proof}

These resolvent estimates can now be combined to prove the second Kato
smoothing estimate for our factorization of $L$.
\begin{lemma} \label{lem:Hsmooth}
Suppose $V(x)$ and $A(x)$ are bounded and
satisfy~\eqref{eq:Vdecay},~\eqref{eq:Adecay}.
Each of the operators $Z_j$, $j = 1, 2, 3$, satisfies the bound
\begin{equation} \label{eq:Hsmooth}
\norm[Z_je^{-itH}P_{ac}(H)f][L^2(\R\times\R^n)] \les \norm[f][2]
\end{equation}
for all functions $f \in L^2(\R^n)$.  To prevent ambiguity, the first two
of these mapping estimates are interpreted as follows.
\begin{align*}
\int_{\R^n} \japanese[x]^{-2}
\bignorm[e^{-itH}P_{ac}(H)f(\,\cdot\,,x)][H^{\frac14}(\R)]^2 \,dx
&\les \norm[f][2]^2 \\
\int_{\R^n} \japanese[x]^{2}
\bignorm[A\cdot \nabla \big(e^{-itH}P_{ac}(H)f\big)(\,\cdot\,,x)
][H^{-\frac14}(\R)]^2 \,dx &\les \norm[A][X]^2 \norm[f][2]^2
\end{align*}
with the intermediate function $e^{-itH}P_{ac}(H)f$ taking values at all times
$t \in \R$, both positive and negative.
\end{lemma}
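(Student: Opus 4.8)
The plan is to run exactly the $TT^*$ argument that produced Lemma~\ref{lem:H_0smooth}, but now landing on the perturbed resolvents. As in that proof, after forming the $TT^*$ operator associated with $Z_je^{-itH}P_{ac}(H)$ and taking the Fourier transform in the time variable --- which turns $\japanese[\partial_t]^{\pm\frac14}$ into the scalars $\japanese[\lambda]^{\pm\frac14}$ ($\lambda$ being the variable dual to $t$) and, by Stone's formula, identifies the transformed propagator with the spectral density $\frac{1}{2\pi i}\big(R_L^+(\lambda)-R_L^-(\lambda)\big)$ of the absolutely continuous part of $H$ --- the three asserted bounds reduce to the resolvent estimates
\begin{align*}
\big\|\japanese[x]^{-1}\big(R_L^+(\lambda)-R_L^-(\lambda)\big)\japanese[x]^{-1}f\big\|_2 &\les \japanese[\lambda]^{-\frac12}\|f\|_2,\\
\big\|\japanese[x]\,A\cdot\nabla\big(R_L^+(\lambda)-R_L^-(\lambda)\big)\nabla\cdot A\,\japanese[x]f\big\|_2 &\les \japanese[\lambda]^{\frac12}\|A\|_X^2\,\|f\|_2,
\end{align*}
required uniformly over $\lambda \ge 0$, together with the weaker statement that $\big|V+|A|^2\big|^{\frac12}\big(R_L^+(\lambda)-R_L^-(\lambda)\big)\big|V+|A|^2\big|^{\frac12}$ is bounded on $L^2$ uniformly in $\lambda$ --- and this last one is subsumed by the first, since $\big|V+|A|^2\big| \les \japanese[x]^{-2}$ under \eqref{eq:Vdecay}--\eqref{eq:Adecay}. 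These are precisely the bounds \eqref{eq:freeresbounds}, with $R_0^\pm$ replaced by $R_L^\pm$, now demanded on the entire half-line.

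For $\lambda > \lambda_0$ the first two estimates are exactly the content of Lemma~\ref{lem:midhighenergy}, and the third is subsumed by the first, so it remains only to handle the compact range $\lambda \in [0,\lambda_0]$. There $\japanese[\lambda] \sim 1$, so the powers of $\japanese[\lambda]$ are irrelevant and only the $L^2\to L^2$ operator norms matter. Here I would invoke the resolvent identity
\begin{equation*}
R_L^+(\lambda)-R_L^-(\lambda) = \big(I+R_0^+(\lambda)L\big)^{-1}\big(R_0^+(\lambda)-R_0^-(\lambda)\big)\big(I+LR_0^-(\lambda)\big)^{-1}.
\end{equation*}
The middle factor maps $Y \to Y^*$ with norm uniformly bounded in $\lambda$, as recorded in the discussion preceding \eqref{eq:Y}; the outer factors are uniformly bounded on $Y^*$ and on $Y$ respectively, the first by Lemma~\ref{lem:lowenergy} and the second by duality (recall $L$ is symmetric, so $I+LR_0^-$ is the adjoint of $I+R_0^+L$). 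Hence $R_L^+(\lambda)-R_L^-(\lambda)$ maps $Y \to Y^*$ uniformly over $[0,\lambda_0]$.

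To convert this into the weighted $L^2$ estimates it suffices to check that the relevant outer weights factor through $Y$ and $Y^*$. Multiplication by $\japanese[x]^{-1}$ sends $L^2$ isometrically into $\japanese[x]^{-1}L^2 \subset Y$ and sends $Y^* \subset \japanese[x]L^2$ back into $L^2$, and $\big|V+|A|^2\big|^{\frac12}$ is handled the same way. For the magnetic term, the observation following \eqref{eq:X} that multiplication by a function in $X$ carries $\japanese[x]L^2$ into $B$ shows that $A\,\japanese[x]$ maps $L^2$ into $B$; applying $\nabla\cdot$ and using the boundedness of the Riesz transforms on $B$ gives that $\nabla\cdot A\,\japanese[x]$ maps $L^2$ into $B_{-1} \subset Y$, and the left weight $\japanese[x]\,A\cdot\nabla$ is its adjoint and maps $Y^*$ into $L^2$. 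Composing with the uniform $Y \to Y^*$ bound for the resolvent difference yields the three estimates on $[0,\lambda_0]$; combined with the range $\lambda > \lambda_0$ they hold on all of $[0,\infty)$, which, reinserted into the $TT^*$ framework, proves the lemma.

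The main obstacle is conceptual rather than computational: the whole reduction is hostage to the resolvent difference remaining bounded uniformly as $\lambda \downarrow 0$, the bottom of the absolutely continuous spectrum, and it is exactly there that one needs the hypothesis excluding an eigenvalue or resonance of $H$ --- a point already discharged in Lemma~\ref{lem:lowenergy}. The only genuinely new bookkeeping here is the passage through the fractional time-derivative weights in the $TT^*$ step, and this is painless precisely because $A$ and $V$ are time-independent: $\japanese[\partial_t]^{\frac14}$ commutes past every operator in sight and becomes the scalar $\japanese[\lambda]^{\frac14}$ under the Fourier transform, so the argument differs from that of Lemma~\ref{lem:H_0smooth} only in which resolvent --- $R_0^\pm$ or $R_L^\pm$ --- appears at the end.
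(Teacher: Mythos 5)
Your proposal is correct and follows exactly the route the paper intends: the paper gives no separate proof of this lemma, stating only that the preceding resolvent estimates "can now be combined," i.e.\ the same $TT^*$/Kato reduction as in Lemma~\ref{lem:H_0smooth} applied to $R_L^\pm$, with Lemma~\ref{lem:midhighenergy} covering $\lambda>\lambda_0$ and the resolvent identity plus Lemma~\ref{lem:lowenergy} (and duality for $I+LR_0^-$) covering $[0,\lambda_0]$. Your filling-in of the routine details (the weights factoring through $Y$ and $Y^*$, and the $Z_3$ bound being subsumed by the $\japanese[x]^{-1}$ estimate since $|V+|A|^2|\les\japanese[x]^{-2}$) matches the paper's framework.
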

The Kato smoothing conclusion~\eqref{eq:Kato} in Theorem~\ref{thm:main}
is a direct interpolation of the two inequalities above.

\section{Proof of Theorem~\ref{thm:main}} \label{sec:Proof}

The intended purpose of Lemmas~\ref{lem:H_0smooth} and~\ref{lem:Hsmooth} was
to control the integral term in Duhamel's formula~\eqref{eq:Duhamel}.  For
each term in the factorization of $L$, one would like to
apply~\eqref{eq:Hsmooth} to obtain a function in $L^2(\R\times\R^n)$,
followed by the dual form of~\eqref{eq:H_0smooth}
to return it to $L^2(\R^n)$, with the free Strichartz 
estimate~\eqref{eq:freeStrich} then giving a map into $L^p_tL^q_x$.

The domain of integration in~\eqref{eq:Duhamel} is clearly stated as
$0 \le s \le t$. Unfortunately, the interpretation of fractional derivatives
and time-propagation in both~\eqref{eq:H_0smooth} and~\eqref{eq:Hsmooth}
is just as clearly two-sided in $s$.  The desired Strichartz 
estimate~\eqref{eq:Strichartz} therefore follows from two
modified smoothing bounds which include a sharp cutoff to preserve the
triangular region of integration.

\begin{lemma} \label{lem:newHsmooth}
Suppose $V(x)$ and $A(x)$ are bounded and
satisfy~\eqref{eq:Vdecay},~\eqref{eq:Adecay}.
Each of the operators $Z_j$, $j = 1, 2, 3$, satisfies the bound
\begin{equation} \label{eq:newHsmooth}
\bignorm[Z_j \big(\1_{\{s\ge 0\}} e^{-isH}\big)P_{ac}(H)f][L^2_sL^2_x]
\les \norm[f][2]
\end{equation}
for all functions $f \in L^2(\R^n)$. 
\end{lemma}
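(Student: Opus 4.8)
The plan is to deduce Lemma~\ref{lem:newHsmooth} from Lemma~\ref{lem:Hsmooth} by inserting the sharp time-cutoff $\1_{\{s\ge 0\}}$ and tracking how it interacts with the fractional time derivatives hidden in $Z_1$ and $Z_2$. For $Z_3 = \sign(V+|A|^2)\,|V+|A|^2|^{1/2}$ there is nothing to prove: this is a pure multiplication operator, commutes with the cutoff, and $\1_{\{s\ge 0\}}$ only shrinks the $L^2_s$ norm, so \eqref{eq:newHsmooth} for $j=3$ is immediate from \eqref{eq:Hsmooth}. The real content is in $Z_1 = Y_1 = \japanese[x]^{-1}\japanese[\partial_t]^{1/4}$ and $Z_2 = Y_2 = i\japanese[x]\japanese[\partial_t]^{-1/4}A\cdot\nabla$, both of which involve $\japanese[\partial_t]^{\pm 1/4}$, an operator that does \emph{not} commute with multiplication by $\1_{\{s\ge 0\}}$.

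For these two terms I would write $Z_j\big(\1_{\{s\ge 0\}}e^{-isH}\big)P_{ac} = Z_j\,\1_{\{s\ge 0\}}\,\big(e^{-isH}P_{ac}\big)$ and commute the cutoff past the spatial parts (which is harmless, since $\japanese[x]^{\pm 1}$ and $A\cdot\nabla$ act only in $x$), reducing everything to a one-dimensional statement: the operator $f \mapsto \japanese[\partial_s]^{1/4}\big(\1_{\{s\ge 0\}}\,g(s)\big)$ is bounded on $L^2(\R)$ by $\|g\|_{H^{1/4}(\R)}$, and dually $\japanese[\partial_s]^{-1/4}\big(\1_{\{s\ge 0\}}\,h(s)\big)$ has $L^2$ norm controlled by $\|h\|_{H^{-1/4}(\R)}$. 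The key point is that the Sobolev exponent $\tfrac14$ lies strictly below $\tfrac12$, so $\1_{\{s\ge 0\}}$ is a bounded pointwise multiplier on $H^{1/4}(\R)$ (equivalently, multiplication by a function of bounded variation, or even just a bounded characteristic function, is bounded on $H^s$ for $|s| < \tfrac12$, by a standard fractional Leibniz / Hardy-inequality computation, or by interpolation between $L^2$ and the $\tfrac12-$endpoint). Granting this multiplier bound, composing with \eqref{eq:Hsmooth} gives \eqref{eq:newHsmooth}: for $Z_1$ one estimates $\int \japanese[x]^{-2}\|\1_{\{s\ge 0\}}(e^{-isH}P_{ac}f)(\cdot,x)\|_{H^{1/4}(\R)}^2\,dx \les \int \japanese[x]^{-2}\|(e^{-isH}P_{ac}f)(\cdot,x)\|_{H^{1/4}(\R)}^2\,dx \les \|f\|_2^2$, and similarly for $Z_2$ using the $H^{-1/4}$ formulation and the $\|A\|_X^2$ weight.

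The main obstacle — and the only genuinely technical point — is justifying that $\1_{\{s\ge 0\}}$ is a bounded multiplier on $H^{\pm 1/4}(\R)$ with the cutoff applied \emph{before} (for $Z_1$) or \emph{absorbed into} (for $Z_2$) the fractional derivative, and making sure the order of operations matches the structure of Duhamel's formula. For $Z_2$ the subtlety is that the cutoff must be placed between $\japanese[\partial_t]^{-1/4}$ and $e^{-isH}$, i.e. one needs $\|\japanese[\partial_s]^{-1/4}\1_{\{s\ge 0\}}\japanese[\partial_s]^{1/4}\|_{L^2\to L^2} < \infty$, which is again exactly the $H^{1/4}$ multiplier bound for $\1_{\{s\ge 0\}}$ read through conjugation; I would prove this directly via the Fourier-side kernel of $\japanese[\partial_s]^{-1/4}\1_{\{s\ge0\}}\japanese[\partial_s]^{1/4}$, showing it is Calderón–Zygmund (the $1/4$ and $-1/4$ contributions combine to a kernel with the right size and cancellation, with the $\1_{\{s\ge 0\}}$ contributing only a jump that is controlled since $\tfrac14<\tfrac12$), or more cheaply by complex interpolation of the analytic family $\japanese[\partial_s]^{-z}\1_{\{s\ge0\}}\japanese[\partial_s]^{z}$ between $z=0$ (trivial) and $\Re z$ slightly less than $\tfrac12$. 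Either route is routine; once it is in hand, the lemma follows by the composition described above, and no further input about $H$, $A$, or $V$ beyond Lemma~\ref{lem:Hsmooth} is required.
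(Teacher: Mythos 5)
Your proposal is correct and follows essentially the same route as the paper, which disposes of $Z_3$ trivially and of $Z_1,Z_2$ by the single observation that multiplication by $\1_{\{s\ge 0\}}$ is bounded on $H^\alpha(\R)$ for $|\alpha|<\tfrac12$, applied at $\alpha=\pm\tfrac14$ in composition with Lemma~\ref{lem:Hsmooth}; your extra discussion (Calder\'on--Zygmund kernel or interpolation) merely supplies a proof of that standard multiplier fact, which the paper takes as known. (Minor slip: you interchanged the labels of $Z_1$ and $Z_2$ relative to \eqref{eq:Y*Z}, but both operators are handled, so nothing is affected.)
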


\begin{lemma} \label{lem:newH_0smooth}
Under the same hypotheses, each of the operators $Y_j$, $j=1,2,3$, satisfies
a bound
\begin{equation} \label{eq:newH_0smooth}
\Bignorm[\int_\R \big(\1_{\{t\ge s\}} e^{i(t-s)\Delta}\big)
 Y_j^*g(s,x)\,ds][L^p_tL^q_x] \les \norm[g][L^2_sL^2_x]
\end{equation}
for each Strichartz pair $\frac{2}{p} + \frac{n}{q} = \frac{n}{2}$, \ $p > 2$,
\ and every $g \in L^2(\R\times\R^n)$.
\end{lemma}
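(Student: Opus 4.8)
The plan is to obtain Lemma~\ref{lem:newH_0smooth} from two ingredients. The first is the same estimate with the sharp cutoff removed,
\[
\Bignorm[\int_\R e^{i(t-s)\Delta}Y_j^*g(s,x)\,ds][L^p_tL^q_x]\les\norm[g][L^2_sL^2_x],
\]
which follows at once from the material already in hand. The second is the Christ--Kiselev lemma, which reinstates the retarded restriction $\1_{\{t\ge s\}}$ provided the time-integrability index of the source (here $2$) is strictly less than that of the output (here $p$). This exchange is available precisely when $p>2$, which is the origin of the hypothesis in the statement and reflects the well-known breakdown of the argument at equal exponents.

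For the cutoff-free bound I would write $e^{i(t-s)\Delta}=e^{it\Delta}e^{-is\Delta}$, so that the operator factors as $e^{it\Delta}$ applied to $g\mapsto\int_\R e^{-is\Delta}Y_j^*g(s,\cdot)\,ds$. This second operator is the $L^2$-adjoint of $f\mapsto Y_je^{it\Delta}f$, which by Lemma~\ref{lem:H_0smooth} maps $L^2(\R^n)$ into $L^2(\R\times\R^n)$; hence the adjoint maps $L^2(\R\times\R^n)$ back to $L^2(\R^n)$ with the same norm. Composing with the free Strichartz bound~\eqref{eq:freeStrich}, $e^{it\Delta}\colon L^2(\R^n)\to L^p_tL^q_x$, completes this step. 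The fractional time derivatives carried by $Y_1$ and $Y_2$ cause no trouble here: they are spent in forming $Y_j^*g$ before any propagator acts, so they pass through the factorization of $e^{i(t-s)\Delta}$ untouched.

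Reinstating the cutoff is immediate for $Y_3=\big|V+|A|^2\big|^{1/2}$, which is pointwise multiplication with no dependence on $t$: the operator $g\mapsto\int_\R\1_{\{t\ge s\}}e^{i(t-s)\Delta}Y_3^*g(s,\cdot)\,ds$ literally has the form $\int_\R\1_{\{t\ge s\}}K(t,s)g(s,\cdot)\,ds$ with operator-valued kernel $K(t,s)=e^{i(t-s)\Delta}Y_3^*$ depending measurably on the pair $(t,s)$, so Christ--Kiselev applies to it directly, using the cutoff-free bound above together with $2<p$. The operators $Y_1$ and $Y_2$ are where the genuine obstacle lies: the factors $\langle\partial_t\rangle^{\pm1/4}$ are nonlocal in time, so once they are present the operator $g\mapsto\int_\R\1_{\{t\ge s\}}e^{i(t-s)\Delta}Y_j^*g(s,\cdot)\,ds$ no longer presents an operator-valued kernel in the two time variables --- its value at time $t$ depends on $g$ at all times, not merely at those $s\le t$ --- and the Christ--Kiselev step used for $Y_3$ is unavailable as it stands.

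The way around this is to exploit that $A$ and $V$ are time independent, so $\langle\partial_t\rangle^{\pm1/4}$ commutes with every spatial factor of $Y_j^*$ and, more usefully, may be transferred off the source and onto the output of the retarded Duhamel integral: $\langle\partial_s\rangle^{\pm1/4}$ acting on the input and $\langle\partial_t\rangle^{\pm1/4}$ acting on the output are both multiplication by $\langle\lambda\rangle^{\pm1/4}$ in the time-Fourier variable $\lambda$, while $\int_{s\le t}e^{i(t-s)\Delta}(\cdot)\,ds$ is, in that same variable, the operator-valued multiplier $-i(\lambda-\Delta-i0)^{-1}$ --- essentially the free resolvent $R_0^+(-\lambda)$ --- and these commute. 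After the transfer, the retarded portion of the operator is built solely from the bare propagator $e^{i(t-s)\Delta}$, an honest $(t,s)$-kernel for which Christ--Kiselev is once again available, and the leftover half-order time derivative on the output is absorbed through the parabolic relation $\langle\partial_t\rangle^{1/2}\sim-\Delta$ of the Schr\"odinger flow, which converts it into the half power of $|\nabla|$ that Lemma~\ref{lem:H_0smooth} and the resolvent bounds~\eqref{eq:freeresbounds} are designed to accommodate. I expect the real work to be the bookkeeping in this last step: tracking the weights $\langle x\rangle^{\pm1}$ through the identities, justifying on the Fourier side the identification of the retarded Duhamel integral with the resolvent multiplier (the $-i0$ read as the appropriate one-sided limit), and verifying that the operator produced by the transfer satisfies the Christ--Kiselev hypotheses --- boundedness from $L^2_s(L^2_x)$ to $L^p_t(L^q_x)$ with $p>2$, together with enough regularity of the kernel in $(t,s)$.
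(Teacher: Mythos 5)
Your handling of $Y_3$ and of the cutoff-free (``bulk'') estimate is exactly the paper's: the $TT^*$/adjoint argument from Lemma~\ref{lem:H_0smooth} plus \eqref{eq:freeStrich}, then Christ--Kiselev for $p>2$. You have also correctly located the real difficulty in the nonlocal factors $\japanese[\partial_t]^{\pm\frac14}$ of $Y_1,Y_2$ --- though your claim that Christ--Kiselev is ``unavailable'' there is not quite right: the untruncated operator is still a convolution in $t$ with kernel $\japanese[\partial_s]^{\pm\frac14}e^{i(t-s)\Delta}$, so Christ--Kiselev applies to it directly; what it produces is the operator with the sharp cutoff placed \emph{outside} the fractional derivative, whereas \eqref{eq:newH_0smooth} requires the cutoff \emph{inside}. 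Your proposed fix --- transfer $\japanese[\partial_s]^{\pm\frac14}$ onto the output, apply Christ--Kiselev to the bare retarded propagator, and absorb the leftover time derivative via $\japanese[\partial_t]^{\frac12}\sim-\Delta$ --- does not close this gap. After the transfer the operator is $\japanese[\partial_t]^{\pm\frac14}$ composed with the truncated Duhamel integral; Christ--Kiselev controls only the truncated integral in $L^p_tL^q_x$, and the outer factor $\japanese[\partial_t]^{\pm\frac14}$ is not bounded on $L^p_tL^q_x$. The ``parabolic relation'' cannot rescue this: it is valid only for solutions of the homogeneous free equation, while the retarded Duhamel term solves an inhomogeneous equation whose forcing is merely weighted $L^2_tL^2_x$; and even where it applies it would convert the estimate into a half-derivative-loss Strichartz bound in $L^p_tL^q_x$, which is not available from $L^2$ data and is not what Lemma~\ref{lem:H_0smooth} or \eqref{eq:freeresbounds} (weighted $L^2$ smoothing bounds) provide. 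So the transfer merely relocates the commutation problem rather than solving it.

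The missing ingredient is the estimation of the commutator between the sharp cutoff $\1_{\{t\ge s\}}$ and $\japanese[\partial_s]^{\pm\frac14}$ acting on the propagator kernel. In the paper this difference is written out explicitly as a space-time convolution (after the harmless weights $\japanese[x]^{-1}$, resp.\ $A\japanese[x]$) with kernels $K_1,K_2$ supported where the two cutoffs disagree, and the whole point of the proof is the pointwise bounds \eqref{eq:K1}, \eqref{eq:K2}, obtained by analytic continuation of $R_0^-(\lambda)$ into the lower half-plane and deformation of the $\lambda$-contour onto the segment below $-i$, followed by the verification that $K_1\in L^1_tL^{n/2}_x\cap L^2_tL^1_x$ and $K_2\in L^1_tL^{n/2,\infty}_x\cap L^2_tL^1_x$, so that convolution against them maps $L^2_tL^2_x$ into every Strichartz space. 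Nothing in your proposal supplies a substitute for this quantitative step, so as written the argument for $Y_1$ and $Y_2$ is incomplete.
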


\begin{proof}[Proof of Lemma~\ref{lem:newHsmooth}]
For $Z_3$ this is a trivial consequence of~\eqref{eq:Hsmooth}.  For both
$Z_1$ and $Z_2$ it is also an immediate consequence, as pointwise
multiplication by $\1_{\{s\ge 0\}}$ is a bounded operator on every Sobolev
space $H^\alpha$, $|\alpha| < \frac12$.
\end{proof}

\begin{proof}[Proof of Lemma~\ref{lem:newH_0smooth}]
For the pointwise multiplication operator $Y_3$, the argument in~\cite{RS04}
requires no modification.  The dual form of~\eqref{eq:H_0smooth} combines
with~\eqref{eq:freeStrich} to yield a bound
\begin{equation*}
\Bignorm[\int_\R e^{i(t-s)\Delta}Y_3g(s,x)\,ds][L^p_tL^q_x] 
   \les \norm[g][L^2_sL^2_x]
\end{equation*}
for all Strichartz pairs $(p,q)$ including the endpoint $(2, \frac{2n}{n-2})$.
The Christ-Kiselev lemma~\cite{CK01} (in particular the version appearing 
in~\cite{SS00}) confirms that the same bound is true if the integral is only
taken over the triangular region $t \ge s$, provided there is a strict
inequality $p > 2$ in the exponents.

When the same logic is applied to $Y_1$, the first integral estimate takes the
form
\begin{equation*}
\Bignorm[\int_\R \big(\japanese[\partial_s]^{\frac14}e^{i(t-s)\Delta}\big)
 \japanese[x]^{-1}g(s,x)\,ds][L^p_tL^q_x]  \les \norm[g][L^2_sL^2_x].
\end{equation*}
Directly applying the Christ-Kiselev lemma leads to the further bound
\begin{equation*}
\Bignorm[\int_\R \1_{\{t\ge s\}}
  \big(\japanese[\partial_s]^{\frac14}e^{i(t-s)\Delta}\big)
  \japanese[x]^{-1}g(s,x)\,ds][L^p_tL^q_x]  \les \norm[g][L^2_sL^2_x].
\end{equation*}
The statement of~\eqref{eq:newH_0smooth} for $Y_1$ instead concerns the
operator inequality
\begin{equation*}
\Bignorm[\int_\R \japanese[\partial_s]^{\frac14}
 \big(\1_{\{t\ge s\}} e^{i(t-s)\Delta}\big)
  \japanese[x]^{-1}g(s,x)\,ds][L^p_tL^q_x]  \les \norm[g][L^2_sL^2_x]
\end{equation*}
so we are left to bound the difference between the two.  This consists of
two terms,
\begin{multline*}
\int_\R \Big(\1_{\{t<s\}} \japanese[\partial_s]^{\frac14}
 \big(\1_{\{t\ge s\}}e^{i(t-s)\Delta}\big)  \\
   - \1_{\{t\ge s\}} \japanese[\partial_s]^{\frac14}
 \big(\1_{\{t<s\}} e^{i(t-s)\Delta}\big)\Big)\japanese[x]^{-1}g(s,x)\,ds.
\end{multline*}
It suffices to control the first term, as the second will behave identically
by symmetry.  After the initial multiplication by $\japanese[x]^{-1}$, the
rest of the operator is a convolution in $\R \times \R^n$ with a kernel
$K_1(t,x)$ satisfying the bounds
\begin{equation} \label{eq:K1}
|K_1(t,x)| \les  \begin{cases} 
|x|^{-n-\frac12}\japanese[t|x|^{-2}]^{-\frac54}, \ &{\rm if}\ t \le 1, \\
e^{-t}\,|x|^{-n+2}\japanese[x]^{-\frac52}, \ &{\rm if}\ t>1.
\end{cases}
\end{equation}
A direct computation then shows that $K_1$ belongs comfortably to both
$L^1_tL^{n/2}_x$ and $L^2_tL^1_x$, thus convolution against $K_1$
maps $L^2(\R\times\R^n)$ (even without the extra weight $\japanese[x]^{-1}$)
to any of the mixed-norm Strichartz spaces.

The analysis for $Y_2$ is quite similar.  Once again there is a bulk term
that can be controlled by combining the dual form of~\eqref{eq:H_0smooth}
with~\eqref{eq:freeStrich} and the Christ-Kiselev lemma.  There is a 
nonzero remainder generated by commuting multiplication by the cutoff
$\1_{t\ge s}$ with fractional integration in $s$.  In this case its precise
form is
\begin{multline*}
\int_\R \Big(\1_{\{t<s\}} \japanese[\partial_s]^{-\frac14}
 \big(\1_{\{t\ge s\}}\nabla e^{i(t-s)\Delta}\big)  \\
   - \1_{\{t\ge s\}} \japanese[\partial_s]^{-\frac14}
 \big(\1_{\{t<s\}}\nabla e^{i(t-s)\Delta}\big)\Big)
    \cdot A(x)\japanese[x]g(s,x)\,ds.
\end{multline*}
As before, multiplication by $A\japanese[x] \les 1$ is composed with a
convolution operator whose kernel $K_2(t,x)$ satisfies the bounds
\begin{equation} \label{eq:K2}
|K_2(t,x)| \les  \begin{cases}
|x|^{-n-\frac12}\japanese[t|x|^{-2}]^{-\frac34}, \ &{\rm if}\ t \le 1, \\
e^{-t}\,|x|^{-n+1}\japanese[x]^{-\frac32}, \ &{\rm if}\ t>1.
\end{cases}
\end{equation}
The kernel $K_2$ belongs to both $L^2_tL^1_x$ and $L^1_tL^{n/2,\infty}$,
which is still sufficient to generate a bounded map from $L^2_tL^2_x$ to
each of the Strichartz spaces.

The only task remaining is to verify the asserted size estimates~\eqref{eq:K1}
and~\eqref{eq:K2}.  Each one involves a fractional derivative or integral,
so they are best handled via the Fourier transform.  Recall that the 
convolution kernel for the Schr\"odinger propagator can be expressed as
\begin{equation*}
\1_{\{t\ge 0\}}e^{it\Delta}{\scr(x)} = \int_\R e^{-it\lambda}\Rminus_0(\lambda)
{\scr(x)}\,d\lambda
\end{equation*}
therefore the kernel $K_1(t,x)$ is defined to be
\begin{equation}
K_1(t,x) = \1_{\{t<0\}} \int_\R \japanese[\lambda]^{\frac14}e^{-it\lambda}
  \Rminus(\lambda){\scr(x)}\, d\lambda.
\end{equation}
For any $t<0$, the free resolvent $\Rminus(\lambda)$ has an analytic
continuation in $\lambda$ to the lower halfplane.  Moreover, its kernel has the
asymptotic bound
\begin{equation*}
|D^\alpha \Rminus(\lambda){\scr(x)}| \les |x|^{-n+2-|\alpha|}
 \,e^{|x|\Im[\lambda^{1/2}]}
\japanese[|x|\lambda^{\frac12}]^{\frac{n-3+2|\alpha|}{2}}
\end{equation*}
for derivatives of order $|\alpha| = 0,1$.

The smooth power function
$\japanese[\lambda]^{\frac14}$ is holomorphic in the halfplane with a 
segment removed extending from $-i$ downward along the imaginary axis.
This allows the contour of integration to be moved from the real axis
to the two sides of this imaginary segment.  To be precise, the original
Fourier transform existed only in the distributional sense, so we must
first mollify the behavior at infinity with a function such as 
$\frac{ki}{ki-\lambda}$ and take limits as $k\to\infty$.

On this new path, change variables so that $\lambda = -i\mu^2$.  The
result is that
\begin{align*}
|K_1(t,x)| &= C \Big|\int_1^\infty \mu (1-\mu^4)^{\frac18}e^{-t\mu^2}
 \Rminus(-i\mu^2){\scr(x)}\,d\mu \Big|\\
& \les |x|^{-n+2} \int_1^\infty \mu^{\frac32} e^{-t\mu^2}e^{-c|x|\mu}
 \japanese[|x|\mu]^{\frac{n-3}{2}}\,d\mu \\
|K_2(t,x)| &= C \Big|\int_1^\infty \mu (1-\mu^4)^{-\frac18}e^{-t\mu^2}
 \nabla \Rminus(-i\mu^2){\scr(x)}\,d\mu \Big|\\
& \les |x|^{-n+1} \int_1^\infty \mu^{\frac12} e^{-t\mu^2}e^{-c|x|\mu}
 \japanese[|x|\mu]^{\frac{n-1}{2}}\,d\mu
\end{align*}
where $c = \sqrt{1/2}$ is a fixed constant.  One possible bound comes from
dominating the factor $e^{-c|x|\mu}\japanese[|x|\mu]^\beta$ by a constant,
which leaves
\begin{align*}
|K_1(t,x)| &\les |x|^{-n+2} \int_1^\infty \mu^{\frac32}e^{-t\mu^2}\,d\mu
 \les \min\big(t^{-\frac54}, e^{-t}\big)|x|^{-n+2} \\
|K_2(t,x)| &\les |x|^{-n+1} \int_1^\infty \mu^{\frac12}e^{-t\mu^2}\,d\mu
 \les \min\big(t^{-\frac34}, e^{-t}\big)|x|^{-n+1}
\end{align*}
These appear to be optimal when $|x|^2 < t < 1$, and are adequate for our
purposes when $|x| < 1 < t$.  Another alternative is to dominate $e^{-t\mu^2}$
by $e^{-t}$, in which case
\begin{align*}
|K_1(t,x)| &\les e^{-t}|x|^{-n+2}\int_1^\infty \mu^{\frac32}
 e^{-c|x|\mu}\japanese[|x|\mu]^{\frac{n-3}{2}}\,d\mu 
  \les e^{-t}|x|^{-n-\frac12} \\
|K_2(t,x)| &\les e^{-t}|x|^{-n+1}\int_1^\infty \mu^{\frac12}
 e^{-c|x|\mu}\japanese[|x|\mu]^{\frac{n-1}{2}}\,d\mu
  \les e^{-t}|x|^{-n-\frac12}
\end{align*}
These bounds appear to be sharp when $t < |x|^2 < 1$, but are also sufficient
to complete the argument whenever $t, |x| \ge 1$.

\end{proof}

\end{document}